\newtheorem*{thm*}{Theorem}
\newcommand{\ff}{{\mathcal F}}
\newcommand{\G}{{\mathcal G}}
\newcommand{\aaa}{{\mathcal A}}
\newtheorem*{cla*}{Claim}
\newtheorem{thm}{Theorem}
\newtheorem{cla}[thm]{Claim}
\newtheorem{ex}[thm]{Example}
\newtheorem{prb}{Problem}
\newtheorem{cor}[thm]{Corollary}
\date{}
\newtheorem{prop}[thm]{Proposition}
\newtheorem{obs}[thm]{Observation}
\newtheorem{defn}{Definition}
\DeclareMathOperator{\E}{\mathrm E}
\DeclareMathOperator{\s}{\mathcal S}
\date{}
\title{Uniform intersecting families with large covering number}
\author{Peter Frankl\footnote{R\'enyi Institute, Budapest, Hungary and Moscow Institute of Physics and Technology, Russia, Email: {\tt peter.frankl@gmail.com}} and Andrey Kupavskii\footnote{G-SCOP, CNRS, University Grenoble-Alpes, France; Moscow Institute of Physics and Technology, Russia; Saint Petersburg State University, Russia. Email: {\tt kupavskii@yandex.ru}.}}
\begin{document}
\maketitle
\begin{abstract}
 A family $\mathcal F$ has covering number $\tau$ if the size of the smallest set intersecting all sets from $\mathcal F$ is equal to $\tau$. Let $M(n,k,\tau)$ stand for the size of the largest intersecting family $\mathcal F$ of $k$-element subsets of $\{1,\ldots,n\}$ with covering number $\tau$. It is a classical result of Erd\H os and Lov\'asz that $M(n,k,k)\le k^k$ for any $n$. In this short note, we explore the behaviour of $M(n,k,\tau)$ for $n<k^2$ and large $\tau$. The results are quite surprising: For example, we show that
 $$M(n,k,\tau)\begin{cases}
                =(1-o(1)){n-1\choose k-1}, & \mbox{if }  n = \lfloor k^{3/2}\rfloor \text{ and } \tau\le k-k^{3/4+o(1)} \text{ as } k\to\infty\\
                <e^{-ck^{1/2}}{n\choose k}, & \mbox{if }   n = \lfloor k^{3/2}\rfloor \text{ and } \tau>k-\frac 12k^{1/2}.
              \end{cases}$$
% for $$, we show that $M(n,k,\tau) = $ for $$. At the same time, for the same $n$, $M(n,k,\tau)$ if $$.
\end{abstract}

\section{Introduction}
Let $[n] = \{1,2,\ldots, n\}$ be the standard $n$-element set, $2^{[n]}$ its power set and, for $0\le k\le n$, let ${[n]\choose k}$ denote the collection of all $k$-subsets of $[n]$. We also use notation $[a,b] = \{a,a+1,\ldots, b\}$. Subsets of $2^{[n]}$ are called {\it families}.
A family is called {\it intersecting} if $F\cap F'\ne \emptyset$ for all $F, F'\in \ff$. Much of extremal set theory developed around the Erd\H os--Ko--Rado theorem:
\begin{thm}[Erd\H os--Ko--Rado theorem \cite{EKR}]
  Let $n\ge 2k>0$. Suppose that $\ff\subset {[n]\choose k}$ is intersecting. Then
  \begin{equation}\label{eq1.1}
    |\ff|\le {n-1\choose k-1}.
  \end{equation}
\end{thm}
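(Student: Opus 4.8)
The plan is to prove the Erd\H os--Ko--Rado theorem by Katona's cyclic permutation method, which is the most economical self-contained route. I would fix an arbitrary cyclic ordering of $[n]$ --- that is, place $1,\dots,n$ around a circle --- and consider the $n$ \emph{arcs} of length $k$, where an arc is a set of $k$ cyclically consecutive elements. The crux is a local statement: for any fixed cyclic ordering, \emph{at most $k$ of its $n$ arcs belong to $\ff$}.

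To establish the local statement, suppose one of the arcs, say $A=\{a_1,\dots,a_k\}$ listed in cyclic order, lies in $\ff$ (if no arc of this ordering is in $\ff$, the bound is trivial). A direct count shows that exactly $2k-2$ arcs other than $A$ meet $A$, and that they fall into $k-1$ pairs $\{B_i,C_i\}$, $i=1,\dots,k-1$, where $B_i$ is the arc whose last element is $a_i$ and $C_i$ is the arc whose first element is $a_{i+1}$. The key point is that $B_i\cap C_i=\emptyset$: together $B_i$ and $C_i$ would span $2k$ consecutive positions, which fits on the circle precisely because $n\ge 2k$. Since $\ff$ is intersecting, at most one member of each pair belongs to $\ff$, so at most $(k-1)+1=k$ arcs of this ordering lie in $\ff$, proving the local statement.

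I would then finish by double counting the set of pairs $(\sigma,F)$, where $\sigma$ runs over all $(n-1)!$ cyclic orderings of $[n]$ and $F\in\ff$ is an arc of $\sigma$. On the one hand, a fixed $k$-set $F$ is an arc of exactly $k!\,(n-k)!$ cyclic orderings (order the $k$ elements of $F$ inside a block, then order the resulting $n-k+1$ objects cyclically), so the number of pairs equals $|\ff|\,k!\,(n-k)!$. On the other hand, by the local statement each of the $(n-1)!$ orderings contributes at most $k$ pairs. Comparing, $|\ff|\,k!\,(n-k)!\le k\,(n-1)!$, hence $|\ff|\le \frac{k\,(n-1)!}{k!\,(n-k)!}=\binom{n-1}{k-1}$, which is \eqref{eq1.1}.

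The main obstacle is the local statement, and inside it the bookkeeping that lists the $2k-2$ arcs meeting $A$ and organizes them into \emph{disjoint} pairs; this is the only place where the hypothesis $n\ge 2k$ is actually used, and the boundary case $n=2k$ (where the two arcs in each pair are complementary) should be checked to make sure nothing degenerates. Should the pairing prove awkward to present cleanly, a fallback is the classical shifting/compression argument, reducing to a shifted family and then arguing directly, or the Daykin-style deduction from the Kruskal--Katona theorem; but I expect Katona's circle argument to be the cleanest to write out in full.
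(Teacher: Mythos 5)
Your proof is correct and complete: this is Katona's cycle (circle) argument, and all the delicate points are handled properly --- the identification of the $2k-2$ arcs meeting $A$, the disjointness of each pair $B_i,C_i$ (which is exactly where $n\ge 2k$ enters, and which degenerates harmlessly to complementary pairs when $n=2k$), and the count $k!\,(n-k)!$ of cyclic orderings in which a fixed $k$-set forms an arc. Note that the paper itself does not prove the Erd\H os--Ko--Rado theorem; it is stated as a cited classical result from \cite{EKR}, so there is no in-paper argument to compare against --- your write-up is a valid self-contained proof.
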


The theorem is tight, as is witnessed by the following simple example of an intersecting family, called the {\it full star}: $\s_x:= \{S\subset {[n]\choose k}: x\in S\}$, where $x\in [n]$. Any subfamily of a full star is called a {\it star}, and the element $x$ is called the {\it center} of the star.
By now there are myriads of results having their origin in the Erd\H os--Ko--Rado theorem. One of the important directions deals with stability of \eqref{eq1.1}. The idea is to impose further conditions on $\ff$ in order to quantify the difference between $\ff$ and any star, and to improve the bound \eqref{eq1.1} accordingly. One of the most natural such measures of difference is the covering number of $\ff$:
\begin{defn}
For a family $\ff$ consisting of non-empty subsets and a set $T$, we say that $T$ is a {\em cover} or {\em transversal} for $\ff$ if $F\cap T\ne\emptyset $ holds for all $F\in \ff$. The {\em covering number} $\tau(\ff)$ is the minimum of $|T|$ over all covers of $\ff$.
\end{defn}
Note that if $\ff$ is intersecting then each $F\in \ff$ is a cover.

The goal of this paper is to investigate, to which extent we can hope to improve \eqref{eq1.1} if we impose a lower bound on the covering number of $\ff$. For reasons that will become clear later we are most interested in the regime $2k<n<k^2$.

\begin{obs}
  Suppose that $\ff\subset {[n]\choose k}$ is intersecting. Then
  \begin{equation}\label{eq1.2}
    1\le \tau(\ff)\le k.
  \end{equation}
\end{obs}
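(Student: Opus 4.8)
The plan is to prove both inequalities directly from the definitions, using the remark made immediately before the statement: if $\ff$ is intersecting, then every member of $\ff$ is itself a transversal of $\ff$. Throughout we may assume $\ff\ne\emptyset$, since otherwise $\tau(\ff)=0$ and this degenerate case is excluded by the standing hypotheses of the paper (in particular $k\ge 1$).

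For the lower bound $\tau(\ff)\ge 1$, I would fix an arbitrary $F\in\ff$. Since $|F|=k\ge 1$, the set $F$ is non-empty, so any transversal $T$ of $\ff$ must satisfy $T\cap F\ne\emptyset$, which forces $|T|\ge 1$. Minimising over all transversals yields $\tau(\ff)\ge 1$. For the upper bound $\tau(\ff)\le k$, I would again fix an arbitrary $F\in\ff$ and now use the intersecting property: $F\cap F'\ne\emptyset$ for every $F'\in\ff$, which is exactly the assertion that $F$ is a transversal of $\ff$. Hence $\tau(\ff)\le|F|=k$.

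There is no real obstacle here — the argument is a two-line unwinding of the definitions, and the only point deserving a word of care is the trivial case $\ff=\emptyset$ together with the implicit assumption $k\ge 1$. The actual interest of the observation lies in the fact that, under the intersecting condition, the covering number \emph{can} reach the extreme value $k$; constructing families attaining (or nearly attaining) this, and analysing how large such families can be when $n<k^2$, is the subject of the rest of the paper.
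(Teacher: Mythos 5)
Your proof is correct and matches the paper's (implicit) reasoning exactly: the upper bound is the remark, stated just before the Observation, that every $F\in\ff$ is itself a cover, and the lower bound is the trivial fact that a cover of a non-empty family of non-empty sets cannot be empty. Nothing further is needed.
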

Note that $\tau(\ff) = 1$ if and only if all members of $\ff$ contain a fixed element $x$, i.e., if $\ff$ is a star. Let us recall the following fundamental result.
\begin{thm}[Hilton--Milner theorem \cite{HM}] Let $n>2k\ge 4$. Suppose that $\ff\subset {[n]\choose k}$ is intersecting, and $\tau(\ff)\ge 2$. Then
\begin{equation}\label{eq1.30} |\ff|\le {n-1\choose k-1}-{n-k-1\choose k-1}+1.
\end{equation}
\end{thm}
This inequality is tight. To show the families that attain equality in \eqref{eq1.30} let us first define a more general class of families. For two integers $p,q$, $p\ge q$, we use notation $[p,q] =\{p,p+1,\ldots, q\}$.

\begin{ex} Let $n,k,t$ be integers, $n\ge 2k$, $k> t\ge 2$. Define $$\aaa_t := \aaa_t(n,k):={[2,k+t]\choose k}\cup \Big\{A\in{[n]\choose k}: 1\in A, |A\cap [2,k+t]|\ge t\Big\}.$$
\end{ex}
Note that ${[k+t]\choose k}\subset \aaa_t$, and thus $\tau(\aaa_t)\ge t+1$. Since $[t+1]$ is a cover for $\aaa_t$, $\tau(\aaa_t) = t+1$ follows.
It is easy to check that $\aaa_t$ is intersecting and that $\aaa_1$ attains equality in \eqref{eq1.30}. As a matter of fact, for $k>3$ any extremal family for the Hilton--Milner theorem can be obtained from $\aaa_t$ by permuting the ground set $[n]$.

We should mention that during the years many different, shorter proofs were found for \eqref{eq1.30}. Let us mention a few: Frankl--F\"uredi \cite{FF}, M\"ors \cite{M}, Borg \cite{B}, Kupavskii--Zakharov \cite{KZ}, Frankl \cite{F47}.
To find the largest intersecting family with fixed $\tau\ge 3$ is a much more complicated problem. The second author determined the largest intersecting family with $\tau(\ff) \ge 3$, for $n>Ck$ \cite{Kup23} with some large absolute $C$. This improved upon an earlier result of the first author \cite{F8}, which resolved the same problem  for $n>n_0(k)$. The extremal example in this case is different from $\aaa_2$.

Let us look at the other end of the spectrum of the values of $\tau$. One early gem in extremal set theory was the seminal paper of Erd\H os and Lov\'asz \cite{EL}. Let us recall one fundamental result from this paper.

\begin{thm}
  Suppose that $\ff\subset {[n]\choose k}$ is intersecting with $\tau(\ff) = k$. Then
  \begin{equation}\label{eq1.4}
    |\ff|\le k^k.
  \end{equation}
\end{thm}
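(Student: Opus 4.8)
The plan is to discard intersecting-ness after a single observation and instead bound the number of \emph{minimum transversals} of $\ff$. Since $\ff$ is intersecting, every $F\in\ff$ meets every member of $\ff$, so $F$ is itself a transversal of $\ff$; as $|F|=k=\tau(\ff)$, it is a minimum transversal. Distinct members of $\ff$ are distinct sets, so $|\ff|$ is at most the number of $k$-element transversals of $\ff$. It therefore suffices to prove:

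\medskip
\noindent\textbf{Key Lemma.} \emph{If $\G$ is a family of $k$-element sets with $\tau(\G)=t$, then $\G$ has at most $k^t$ transversals of size $t$.}
\medskip

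\noindent(The theorem is the case $\G=\ff$, $t=k$.) I would prove the Key Lemma by induction on $t$, with $k$ held fixed. For $t=0$, $\tau(\G)=0$ forces $\G=\emptyset$, whose unique size-$0$ transversal is $\emptyset$. For $t\ge1$, fix any $G_0\in\G$. Every transversal $T$ of $\G$ meets $G_0$, so $y(T):=\min(T\cap G_0)$ is a well-defined element of $G_0$, taking at most $|G_0|=k$ values. Fix $y\in G_0$ and consider the size-$t$ transversals $T$ with $y(T)=y$; each such $T$ contains $y$, and $T\setminus\{y\}$ is a size-$(t-1)$ transversal of $\G_y:=\{G\in\G:y\notin G\}$, since for $G\in\G_y$ we have $(T\setminus\{y\})\cap G=T\cap G\ne\emptyset$. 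The map $T\mapsto T\setminus\{y\}$ is injective on this class, $\G_y$ consists of $k$-element sets, and $\tau(\G_y)=t-1$: indeed $\tau(\G_y)\ge\tau(\G)-1=t-1$ because $S\cup\{y\}$ is a transversal of $\G$ for every transversal $S$ of $\G_y$, while $\tau(\G_y)\le t-1$ since $T\setminus\{y\}$ is such a transversal whenever the class is non-empty (and if the class is empty there is nothing to bound). By the induction hypothesis $\G_y$ has at most $k^{t-1}$ transversals of size $t-1$, hence at most $k^{t-1}$ of our $T$ satisfy $y(T)=y$; summing over the at most $k$ values of $y$ yields at most $k^t$, completing the induction.

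The one genuine idea here is to induct on the covering number $t$ while counting minimum transversals, rather than recursing on $\ff$ itself: the link of an intersecting family at a vertex need not be intersecting (two members may meet only in the deleted vertex), so a direct recursion on $\ff$ collapses, whereas the minimum-transversal count transforms cleanly under $\G\mapsto\G_y$, the operation ``delete all members containing $y$, decrease $\tau$ by one''. I expect this conceptual choice to be the only real obstacle; the remaining work — the two inequalities pinning down $\tau(\G_y)$, and the degenerate cases $t=1$ and $\G_y=\emptyset$ — is routine bookkeeping. Note finally that intersecting-ness enters only in the first sentence (to turn members into transversals): the Key Lemma needs no intersecting hypothesis, and the resulting bound $|\ff|\le k^k$ is valid for every $n$, as stated.
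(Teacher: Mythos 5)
Your proof is correct. The Key Lemma's induction is sound: the two bounds pinning $\tau(\G_y)$ to exactly $t-1$ are right (and the lower bound $\tau(\G_y)\ge t-1$ automatically rules out $\G_y=\emptyset$ when $t\ge 2$, so the degenerate cases really are harmless), the map $T\mapsto T\setminus\{y\}$ lands injectively in the minimum transversals of $\G_y$, and the reduction of the theorem to the lemma via ``every member of an intersecting family is a transversal'' is exactly the standard first step.

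The paper does not prove this theorem (it quotes it from Erd\H os--Lov\'asz), but the same branching mechanism appears in its proof of Theorem~\ref{thmub}: there one uses $\ff(X)\subset\bigcup_{j\in A}\ff(X\cup\{j\})$ for some $A\in\ff$ disjoint from $X$, and iterating from $X=\emptyset$ up to $|X|=\tau=k$ gives $|\ff|\le k^k$ directly, since $\ff(X)$ has at most one member when $|X|=k$. Your version is the transversal-counting dual of that argument: you grow a cover element by element from the ``bottom'' ($\min(T\cap G_0)$ playing the role of the chosen branch), while the paper grows the set $X$ and shrinks the family $\ff(X)$. One remark on your framing: the obstacle you describe --- that the link of an intersecting family need not be intersecting, so ``a direct recursion on $\ff$ collapses'' --- is not actually an obstacle for the paper's recursion, because that recursion never needs $\ff(X)$ to be intersecting; it only needs some member of $\ff$ disjoint from $X$, which is guaranteed by $|X|<\tau(\ff)$. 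So both routes are direct. What your formulation genuinely buys is a slightly more general statement: a bound of $k^t$ on the number of minimum transversals of an arbitrary (not necessarily intersecting) $k$-uniform family with covering number $t$, with the intersecting hypothesis quarantined to a single sentence.
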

Note that the bound \eqref{eq1.4} is independent of $n$.
\begin{ex}[\cite{EL}] \label{ex1.9} Let $k\ge 2$ and  $X_1,\ldots, X_k$ be pairwise disjoint sets with $|X_i| = i,$ $1\le i\le k$. Define the family $\mathcal E_k$, $1\le i\le k$ by
$$\mathcal E_i = \big\{X_i\cup \{y_{i+1},\ldots, y_k\}: y_j\in X_j \text{ for }i<j\le k\big\}.$$
Clearly, $|\mathcal E_i| = \frac {k!}{i!}$. Set $\mathcal E = \mathcal E_1\cup\ldots \cup \mathcal E_k$. Then $|\mathcal E|=\lfloor e k!\rfloor$, $\mathcal E$ is intersecting and $\tau(\mathcal E) = k$.
\end{ex}
Let us define $M(k)$ as the maximum of $|\ff|$ over all $k$-uniform intersecting families with $\tau(\ff) = k$. by the above,
\begin{equation}\label{eq1.5}
  \lfloor e k!\rfloor \le M(k)\le k^k.
\end{equation}
For $k=2$ and $3$ it is known that the example above is optimal. This made Lov\' asz \cite{L} conjecture that it is optimal for all $k$. However, Frankl, Ota and Tokushige \cite{FOT1} disproved it for $k=4$ and then in \cite{FOT2} provided an exponential improvement on the lower bound by showing
\begin{equation}\label{eq1.6}
  \Big(\frac{k-o(1)}2\Big)^k\le M(k).
\end{equation}
Note that Example~\ref{ex1.9} has ${k+1\choose 2}$ vertices. The examples of \cite{FOT2} have roughly the same number of vertices.
Let us describe a similar, but simpler, example for even $k$.
\begin{ex} \label{ex1.99} Let $k\ge 2$ be even and  $X_1,\ldots, X_{k-1}$ be pairwise disjoint sets with $|X_i| = k/2+1,$ $1\le i\le k-1$. Let $v$ be an element of the ground set, $v\notin \cup X_i$. Define the family $\mathcal O = \mathcal O_1\cup \mathcal O_2$ as follows.
$$\mathcal O_1 = \big\{Y: v\in Y, |X_i\cap Y|=1 \text{ for each } i\in [k-1]\big\}.$$
$$\mathcal O_2 = \big\{Z: X_i\subset Z \text{ for some $i\in [k-1]$ and } |Z\cap X_j|=1 \text{ for all }j=i+1,\ldots, i+k/2-1\big\},$$
where the indices in the last expression are taken modulo $k-1$. Note that the sets in both $\mathcal O_1$ and $\mathcal O_2$ have cardinality $k$. We have $|\mathcal O| = (1+o(1))|\mathcal O_1| = (k/2+1)^{k-1}$. It is also possible to check that  $\mathcal O$ is intersecting and that $\tau(\mathcal O) = k$.
\end{ex}

A similar, albeit more general, construction, will appear in the proof of Theorem~\ref{thmlb}, and so we omit the verification of the last two properties of $\mathcal O$ here.

We should also say that the upper bound in \eqref{eq1.5} has been gradually improved, and the current record $M(k)\le k^{k-k^{1/2+o(1)}}$ is due to Zakharov \cite{Z}, improving on \cite{Fra28}.

\subsection{Results}\label{sec11}
Note that for $n =ck^2$, ${n\choose k} = C\frac {n^k}{k!}\sim (ce)^{k+o(k)} k^k$, where $C$ is some constant depending on $c$. That is, the upper bound in \eqref{eq1.5} is meaningful for $n>e^{-1} k^2$. On the other hand, for $n\ge k^2$ we conclude from \eqref{eq1.5} and the asymptotic formula above that intersecting families with covering number $k$ are exponentially small w.r.t. ${n\choose k}$. Thus, the regime $n\ge k^2$ is well-covered by the known upper and lower bounds for the case when no restriction on the size of the ground set is imposed: we know that the upper and lower bounds are of the form $(ck)^k$, where $0.5-o(1)\le c \le 1$, and both upper and lower bounds are exponentially small w.r.t. ${n\choose k}$. In this paper, we mostly focus on the situation for $n<k^2$.%, which is the regime not covered by the upper bound \eqref{eq1.5}.

\begin{thm}\label{thmub} Fix integers $n,k,\tau$ and put $\beta := k+1-\big\lfloor\frac{n-1}{k-1}\big\rfloor$. If an intersecting family $\ff$ satisfies $\tau(\ff) \ge\tau >\beta$ then we have the following bound: $$|\ff|\le {n\choose k} \prod_{i=\beta}^{\tau-1} \frac{(k-i)k}{n-i}.$$
\end{thm}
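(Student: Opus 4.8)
The plan is to prove the bound by an injective-encoding (greedy extension) argument, after first rewriting the target in a more convenient shape. Using the elementary identity $\prod_{i=0}^{j-1}\frac{k-i}{n-i}=\binom{k}{j}\big/\binom{n}{j}$ together with $\binom{n}{k}\binom{k}{\tau}=\binom{n}{\tau}\binom{n-\tau}{k-\tau}$, one checks that
\[
\binom{n}{k}\prod_{i=\beta}^{\tau-1}\frac{(k-i)k}{n-i}=\frac{\binom{n}{\beta}}{\binom{k}{\beta}}\,k^{\tau-\beta}\binom{n-\tau}{k-\tau},
\]
so it suffices to prove $|\ff|\binom{k}{\beta}\le\binom{n}{\beta}\,k^{\tau-\beta}\binom{n-\tau}{k-\tau}$. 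The left-hand side is exactly the number of pairs $(F,S)$ with $F\in\ff$ and $S\subseteq F$, $|S|=\beta$, and the right-hand side suggests the strategy: grow $S$ inside $F$ to a set of size $\tau$, spending a factor $k$ for each of the $\tau-\beta$ new elements, and then describe the rest of $F$ freely.

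Concretely, I would fix once and for all a linear order $\prec$ on $\ff$ and the usual order on $[n]$ (and assume $\ff\ne\emptyset$, which is harmless), and to each pair $(F,S)$ assign a chain $S=S_{\beta}\subsetneq S_{\beta+1}\subsetneq\dots\subsetneq S_{\tau}\subseteq F$ with $|S_j|=j$, built greedily: given $S_j$ with $j<\tau$, the set $S_j$ has size $j<\tau=\tau(\ff)$ and so is not a transversal of $\ff$, hence some member of $\ff$ is disjoint from $S_j$; let $G_j$ be the $\prec$-least such member, let $s_{j+1}$ be the smallest element of $F\cap G_j$ (nonempty, as $\ff$ is intersecting), and set $S_{j+1}:=S_j\cup\{s_{j+1}\}$. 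Since $s_{j+1}\in G_j$ avoids $S_j$, this is a legitimate one-element extension staying inside $F$; after $\tau-\beta$ steps we obtain $\widetilde F:=S_\tau\subseteq F$ with $|\widetilde F|=\tau$ and ``new elements'' $s_{\beta+1},\dots,s_\tau$.

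The encoding is then $(F,S)\longmapsto\big(S,\,s_{\beta+1},\dots,s_\tau,\,F\setminus\widetilde F\big)$. It is injective, since from a tuple in the image we recover $\widetilde F=S\cup\{s_{\beta+1},\dots,s_\tau\}$ and then $F=\widetilde F\cup(F\setminus\widetilde F)$. For the count: $S$ is a $\beta$-subset of $[n]$, so at most $\binom{n}{\beta}$ possibilities; $F\setminus\widetilde F$ is a $(k-\tau)$-subset of the $(n-\tau)$-set $[n]\setminus\widetilde F$, so at most $\binom{n-\tau}{k-\tau}$ possibilities; and the block $(s_{\beta+1},\dots,s_\tau)$ contributes at most $k^{\tau-\beta}$, because each $s_{j+1}$ lies in the $k$-element set $G_j$. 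Multiplying these bounds gives the displayed inequality, hence the theorem.

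The one step that needs genuine care — and which I expect to be the only real obstacle — is justifying the factor $k^{\tau-\beta}$: one must observe that $G_j$ depends only on $S_j$, and $S_j$ depends only on $S$ and $s_{\beta+1},\dots,s_j$, so that when counting the tuples ``from left to right'' the set $G_j$ (and hence the $k$-element range available to the coordinate $s_{j+1}$) is already determined by the earlier coordinates, with no reference to $F$. The rest is the elementary binomial bookkeeping of the first display. I would also remark that the argument works verbatim with $\beta$ replaced by any integer $b$ with $0\le b<\tau$, giving $|\ff|/\binom{n}{k}\le\prod_{i=b}^{\tau-1}\frac{(k-i)k}{n-i}$; the stated value $\beta=k+1-\lfloor(n-1)/(k-1)\rfloor$ is simply the choice minimizing the right-hand side, since $\frac{(k-i)k}{n-i}\le 1$ precisely when $i\ge\beta$.
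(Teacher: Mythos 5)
Your proof is correct and rests on the same key observation as the paper's: a set $X$ with $|X|<\tau(\ff)$ is not a cover, so some $A\in\ff$ is disjoint from it, and every member of $\ff$ containing $X$ meets $A$, i.e.\ can be extended inside itself by one of the $k$ elements of $A$. The paper packages this as a recursion on the maximal link densities $\alpha_i=\max_{|X|=i}|\ff(X)|/\binom{n-i}{k-i}$ (the averaging step $\alpha_0\le\alpha_\beta$ playing the role of your count of pairs $(F,S)$ with $|S|=\beta$), whereas you unroll the same recursion into an explicit injective encoding; the two are equivalent, and your closing remark that $\beta$ is the optimal starting point because $\frac{(k-i)k}{n-i}\le 1$ precisely for $i\ge\beta$ is exactly the paper's justification for where the product begins.
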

The next corollary gives numerical bounds that follow from Theorem~\ref{thmub}. In the results employing $o$-notation, we mean that we take a limit $k\to \infty$. We also note that, although we do not actually need $k$ to be particularly large, the results and the calculations are much cleaner in their asymptotic form.

\begin{cor}\label{corub}
  If $\ff\subset {[n]\choose k}$ for $n<k^2$ and $n/k\to \infty$ satisfies $\tau (\ff) = k-c\frac nk+1$ with some constant $c<1$, then $|\ff|\le e^{-(c'+o(1))n/k}{n\choose k}$, where $c' = 1-c+c\log c$.
\end{cor}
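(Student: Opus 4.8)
The plan is to derive the corollary directly from Theorem~\ref{thmub} by estimating the product on its right-hand side; all the work is in the asymptotics. First I would check the hypothesis $\tau(\ff)>\beta$ of the theorem. Since $n/k\to\infty$, one has $\frac{n-1}{k-1}-\frac nk=\frac{n-k}{k(k-1)}=o\big(\frac nk\big)$ (indeed $\frac{n-k}{k(k-1)}\cdot\frac kn=\frac{n-k}{n(k-1)}\le\frac1{k-1}\to 0$), and rounding changes a quantity by at most $1=o(n/k)$, so $\beta=k-\frac nk(1+o(1))$. With $\tau=k-c\frac nk$ and $c<1$ fixed we get $\tau-\beta=(1-c)\frac nk(1+o(1))\to\infty$, so for all large $k$ Theorem~\ref{thmub} applies and gives
$$\frac{|\ff|}{{n\choose k}}\ \le\ \prod_{i=\beta}^{\tau-1}\frac{(k-i)k}{n-i}.$$

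Second, I would reparametrize by $j:=k-i$. As $i$ runs through $\{\beta,\ldots,\tau-1\}$, the index $j$ runs through the integers of $[a,b]$ with $a:=k-\tau+1=c\frac nk+O(1)$ and $b:=k-\beta=\frac nk(1+o(1))$, and the factor with index $i$ becomes $\frac{jk}{n-k+j}$. On this range $0<k-j<k$, so $n-k+j=n\big(1-\frac{k-j}{n}\big)=n\big(1+o(1)\big)$ uniformly (as $k/n\to 0$), and hence each factor equals $\frac{jk}{n}(1+o(1))$ with a uniform $o(1)$. Setting $M:=n/k$ and taking logarithms,
$$\log\frac{|\ff|}{{n\choose k}}\ \le\ \sum_{j=a}^{b}\log\frac jM\ +\ o\Big(\frac nk\Big),$$
the error being the sum of $b-a+1=O(n/k)$ uniformly small quantities.

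Third, I would compare the sum with an integral. The map $t\mapsto\log(t/M)$ is increasing on $[a,b]$ with values in $[\log c+o(1),\,o(1)]$, so the sum and $\int_a^b\log(t/M)\,dt$ differ by $O(1)$. The substitution $t=Mt'$, together with $\int\log t'\,dt'=t'\log t'-t'$, gives $\int_a^b\log(t/M)\,dt=M\int_{c+o(1)}^{1+o(1)}\log t'\,dt'=M\big(-(1-c+c\log c)+o(1)\big)=-(c'+o(1))\frac nk$. Combining the last two displays yields $|\ff|\le e^{-(c'+o(1))n/k}{n\choose k}$, as required. I do not expect a genuine obstacle: the argument is a matter of keeping every error term at the scale $o(n/k)$ so that it can be absorbed into the $o(1)$ in the exponent; the only sanity check is that $c'=1-c+c\log c$ is positive on $(0,1)$, which holds since $c'=0$ at $c=1$ and $\frac{d}{dc}c'=\log c<0$ on $(0,1)$.
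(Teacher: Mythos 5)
Your proposal is correct and takes essentially the same route as the paper: both apply Theorem~\ref{thmub} and then estimate the product $\prod_{i=\beta}^{\tau-1}\frac{(k-i)k}{n-i}$ asymptotically --- you via a sum-to-integral comparison of the logarithms, the paper via explicit Stirling-type bounds on the factorials, which is the same computation in different clothing (and your explicit check that $\tau>\beta$ is a welcome addition). The one caveat is that your claim that the summand values lie in $[\log c+o(1),\,o(1)]$ presumes $c>0$; for $c=0$ (which the corollary's convention $c\log c=0$ allows) the lower endpoint is $-\log(n/k)$, but this is still $o(n/k)$, so the sum-to-integral error remains absorbable and nothing breaks.
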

Here and in what follows $\log$ stands for natural logarithm. We use a standard convention that $c\log c = 0$ for $c=0$. It is not hard to check that $1-c+c\log c$ is positive for any $c\in (0,1)$.

Let us now turn to constructions that provide lower bounds. They are in spirit similar to the aforementioned construction of Frankl, Ota and Tokushige \cite{FOT2}.

For positive integers $k,\ell$, we define
$$f(k,\ell) = \log k+\sqrt{\log^2 k+2\ell \log k}.$$

\begin{thm}\label{thmlb} Take positive integers $n,\ell,m, k$ such that $k=1+(\ell+f_1(k,\ell)+1)(2m+1)$ for some $f_1(k,\ell)\ge f(k,\ell)$, $n\ge 2k$ and $n=1+(\ell+1)(m+2)(2m+1)$. There is an intersecting family $\ff\subset {[n]\choose k}$ with $\tau(\ff) \ge \frac{\ell+1}{\ell+f_1(k,\ell)+1}k$ and $|\ff| = (1-o(1)){n-1\choose k-1}$ as $k\to \infty$.
\end{thm}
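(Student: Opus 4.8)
The plan is to realise $\ff$ as a full star with centre $n$, slightly trimmed, together with a small but carefully chosen intersecting family of $k$-sets avoiding $n$; the construction of that small family is the real content.

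\textbf{Step 1: reduction.} I would set $\tau:=\big\lceil\frac{\ell+1}{\ell+f_1+1}\,k\big\rceil$ and reduce the theorem to producing an intersecting family $\G\subset\binom{[n-1]}{k}$ with $\tau(\G)\ge\tau$ and $|\G|\cdot\binom{n-k-1}{k-1}=o\!\big(\binom{n-1}{k-1}\big)$. Given such a $\G$, put
$$\ff_0:=\Big\{F\in\binom{[n]}{k}:n\in F,\ F\cap G\ne\emptyset\ \text{ for all }G\in\G\Big\},\qquad \ff:=\ff_0\cup\G .$$
Then $\ff$ is intersecting (two sets in $\ff_0$ share $n$; two sets in $\G$ meet by hypothesis; a set of $\ff_0$ meets each $G\in\G$ by construction, using $n\notin G$). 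Also $\tau(\ff)\ge\tau$: if $|T|\le\tau-1$ were a cover of $\ff$, then $T\setminus\{n\}$ has fewer than $\tau\le\tau(\G)$ elements, hence misses some $G\in\G$, and since $n\notin G$ it misses $G$ entirely — impossible as $G\in\ff$. Finally, the number of $k$-sets through $n$ disjoint from some member of $\G$ is at most $|\G|\binom{n-k-1}{k-1}$, so $|\ff|\ge|\ff_0|\ge\binom{n-1}{k-1}-|\G|\binom{n-k-1}{k-1}=(1-o(1))\binom{n-1}{k-1}$, while $|\ff|\le\binom{n-1}{k-1}$ by Erd\H os--Ko--Rado (here $n\ge2k$). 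Since $\tau\ge\frac{\ell+1}{\ell+f_1+1}k$, this is exactly the assertion.

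\textbf{Step 2: the budget for $|\G|$.} Plugging in $n-1=(\ell+1)(m+2)(2m+1)$ and $k-1=(\ell+f_1+1)(2m+1)$ and using $\tfrac{n-k-1-i}{n-1-i}\le \exp(-\tfrac{k}{n-1})$,
$$\frac{\binom{n-k-1}{k-1}}{\binom{n-1}{k-1}}\ \le\ \exp\!\Big(-\frac{(k-1)^2}{n-1}\Big)\ =\ \exp\!\Big(-\frac{(\ell+f_1+1)^2(2m+1)}{(\ell+1)(m+2)}\Big),$$
so it suffices to find $\G$ as above with $\log|\G|\le \frac{(\ell+f_1+1)^2(2m+1)}{(\ell+1)(m+2)}-\omega(1)$.

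\textbf{Step 3: building $\G$ — the main obstacle.} What remains is to construct an intersecting $k$-uniform family with covering number $\ge\tau\approx(\ell+1)(2m+1)$, living on at most $n-1$ points, with only $\exp\!\big((1-o(1))(k-1)^2/(n-1)\big)$ edges; this is where the real work lies and where the ideas of Frankl--Ota--Tokushige \cite{FOT2} enter. The natural device for inflating the covering number is the substitution product of intersecting hypergraphs: replacing each vertex of an intersecting hypergraph $\mathcal H$ by a disjoint copy of a fixed intersecting hypergraph $\mathcal H_0$ and taking as edges the sets $\bigcup_{v\in E}E_v$ with $E\in\mathcal H$ and $E_v$ an edge of the $v$-th copy yields an intersecting hypergraph whose covering number is $\tau(\mathcal H)\cdot\tau(\mathcal H_0)$ and whose uniformity multiplies accordingly; iterating this along the factorisations $\tau\approx(\ell+1)(2m+1)$ and $k-1=(\ell+f_1+1)(2m+1)$, one can hope to assemble $\G$ from small ``projective-plane-type'' gadgets that are $(\ell+f_1+1)$-uniform with covering number $\ell+1$. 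The difficulty is purely quantitative: each level of the product multiplies the number of edges, so every gadget must be as edge-efficient as possible, and squeezing $\log|\G|$ under the Step~2 budget forces a lower bound on the slack $f_1$ between the uniformity $\ell+f_1+1$ and the covering number $\ell+1$ of a gadget. Carrying out this optimisation — weighing the number of edges a gadget of a given covering number must contain against that covering number — is precisely what produces the threshold $f_1\ge f(k,\ell)=\log k+\sqrt{\log^2k+2\ell\log k}$, and the ``$+1$''s and the rounding in $\tau$ contribute only to the $o(1)$'s. I expect Step~3, and in particular producing sufficiently economical gadgets (or whatever more delicate family plays their role), to be the hard part; Steps~1--2 are routine.
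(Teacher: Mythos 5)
Your Step~1 reduction and Step~2 budget are fine, but Step~3 is the entire content of the theorem and you have not carried it out: you explicitly defer the construction of the small family $\G$ to an unspecified optimisation over ``gadgets'', and you assert without proof both that suitable $(\ell+f_1+1)$-uniform gadgets with covering number $\ell+1$ exist in the required quantity and that the threshold $f(k,\ell)$ falls out of the edge count. Worse, your reduction forces the \emph{small} family $\G\subset\binom{[n-1]}{k}$ to carry the whole covering number $\tau\approx(\ell+1)(2m+1)$ by itself, and this runs into a concrete obstruction that is not merely quantitative bookkeeping: substitution products of projective-plane-type gadgets achieving covering number $t$ need on the order of $t^2$ vertices (e.g.\ the outer hypergraph alone, if it is to have covering number $2m+1$ with uniformity $2m+1$, already wants roughly $(2m+1)^2$ points, each then blown up by a gadget on $\ell+f_1+1$ points, giving about $(\ell+f_1+1)(2m+1)^2\approx k\cdot(2m+1)$ vertices), while only $n-1=(\ell+1)(m+2)(2m+1)$ points are available; for $\ell\gg f_1$ these two quantities are comparable and nothing is left over, and in the regime of Corollary~\ref{corlb}(ii) the deficit is a polynomial factor in $k$. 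So the plan as sketched would need a genuinely new idea to fit $\G$ into $[n-1]$, and no such idea is supplied.

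The paper's construction avoids this entirely by \emph{not} splitting the family into ``trimmed star $+$ small high-$\tau$ family''. It fixes $2m+1$ disjoint blocks $W_1,\dots,W_{2m+1}$ of size $s=(\ell+1)(m+2)$ in $[2,n]$ and takes $\G_1$ to be all $k$-sets containing $1$ that meet \emph{every} $W_i$ in at least $\ell+1$ points, together with a small family $\G_2$ of sets that are $(s-\ell)$-concentrated on some $W_i$ and meet the next $m$ blocks cyclically in at least $\ell+1$ points. The large covering number comes from the interplay of the two parts: a cover containing $1$ must still hit $\G_2$, which forces $\ell+1$ points in each block, giving $\tau(\G)=1+(\ell+1)(2m+1)$; the restriction imposed on $\G_1$ (rather than an auxiliary family $\G$) is what makes $\G_1\cup\G_2$ intersecting. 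The condition $f_1\ge f(k,\ell)$ then arises not from any gadget optimisation but from hypergeometric concentration (inequality \eqref{eqconcent}): it is exactly what guarantees that a random $(k-1)$-subset of $[2,n]$ meets each $W_i$ in at least $\ell+1$ points with probability $1-o(1)$, so that $|\G_1|=(1-o(1))\binom{n-1}{k-1}$. I would encourage you to redo Step~3 along these lines; as it stands the proposal proves nothing beyond the (routine) reduction.
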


In the numerical corollary that follows, we in some places ignore that certain values must be integers or certain equations must have integral solutions. This can be achieved by tweaking the parameters a little bit and does not affect the asymptotic results that we present.
\begin{cor}\label{corlb}
  Assume that $n,k$ are sufficiently large positive integers. We have an intersecting family $\ff\subset {[n]\choose k}$ of size $(1-o(1)){n\choose k}$ with $\tau(\ff) = \tau$ with the following values of parameters:
  \begin{itemize}
    \item[(i)] for any $c>28$ with $n \sim\frac{k^2}{c\log k}$  and $\tau \ge c'k$ for some $c'>\frac 14$. Moreover, $c'\to 1$ as $c\to \infty$.
    \item[(ii)] for any $0<\epsilon<1$ with $n \sim k^{2-\epsilon}$  and $\tau \ge k-k^{1-\epsilon/2+o(1)}$.
  \end{itemize}
\end{cor}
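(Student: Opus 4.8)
The plan is to substitute the parameter choices into Theorem~\ref{thmlb} and verify that the resulting family has the claimed size and covering number. The theorem produces, for suitable integers $n,\ell,m,k$ tied together by the relations $k=1+(\ell+f_1(k,\ell)+1)(2m+1)$ and $n=1+(\ell+1)(m+2)(2m+1)$, an intersecting family of size $(1-o(1)){n-1\choose k-1}=(1-o(1)){n\choose k}$ (the last equality since $n/k\to\infty$ implies ${n\choose k}=(1+o(1))\frac nk{n-1\choose k-1}$ — wait, that is the wrong direction; rather ${n-1\choose k-1}/{n\choose k}=k/n\to 0$, so in fact we should read the theorem's conclusion as giving $(1-o(1)){n-1\choose k-1}$ and we must re-examine whether this equals $(1-o(1)){n\choose k}$; it does \emph{not}, so the corollary's ``$(1-o(1)){n\choose k}$'' must be a typo for ``$(1-o(1)){n-1\choose k-1}$'', which is what I will prove). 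The covering number is $\tau(\ff)\ge \frac{\ell+1}{\ell+f_1(k,\ell)+1}k$, so in both parts the task reduces to choosing $\ell$ (and hence $m$, $f_1$) so that this ratio meets the stated threshold, and checking the asymptotic relation between $n$ and $k$.

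For part (i): dividing the two defining relations gives $\frac nk\sim \frac{(\ell+1)(m+2)}{\ell+f_1(k,\ell)+1}\sim \frac{(\ell+1)m}{\ell+f_1}$, while $2m\sim \frac{k}{\ell+f_1}$, so $\frac nk\sim \frac{(\ell+1)k}{2(\ell+f_1)^2}$. Taking $\ell$ to be a large constant multiple of $\log k$, say $\ell\sim a\log k$, we have $f(k,\ell)=\log k+\sqrt{\log^2k+2\ell\log k}\sim(1+\sqrt{1+2a})\log k$, so choosing $f_1=f$ we get $\ell+f_1\sim(a+1+\sqrt{1+2a})\log k$ and hence $\frac nk\sim\frac{a}{2(a+1+\sqrt{1+2a})^2}\cdot\frac{k}{\log k}$, i.e. $n\sim\frac{k^2}{c\log k}$ with $c=\frac{2(a+1+\sqrt{1+2a})^2}{a}$. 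As $a\to\infty$ this $c\to\infty$, and one checks $c>28$ is attainable (the infimum of $c$ over $a>0$ is some constant; I will compute it and confirm it is below $28$, so every $c>28$ is realized by a suitable $a$). For this choice $\tau/k\ge\frac{\ell+1}{\ell+f_1+1}\sim\frac{a}{a+1+\sqrt{1+2a}}=:c'$, which exceeds $\frac14$ for $a$ large enough and tends to $1$ as $a\to\infty$ (equivalently as $c\to\infty$), giving the ``moreover'' clause. For part (ii): now we want $n\sim k^{2-\epsilon}$, which is much larger, so we need $\ell+f_1$ of order roughly $k^{\epsilon/2}$ rather than $\log k$; take $\ell\sim k^{\epsilon/2}$, so $\ell$ dominates $f(k,\ell)\sim\sqrt{2\ell\log k}=k^{\epsilon/4+o(1)}$, and then $\frac nk\sim\frac{(\ell+1)k}{2(\ell+f_1)^2}\sim\frac{k}{2\ell}\sim\tfrac12 k^{1-\epsilon/2}$, matching $n\sim k^{2-\epsilon}$ up to the constant (absorbed by tweaking $\ell$, as the corollary's preamble permits); and $\tau\ge\frac{\ell+1}{\ell+f_1+1}k=k\bigl(1-\frac{f_1}{\ell+f_1+1}\bigr)=k-k^{1-\epsilon/4+o(1)}=k-k^{1-\epsilon/2+o(1)}$ — here I must double-check the exponent: $\frac{k f_1}{\ell}\sim\frac{k\cdot k^{\epsilon/4+o(1)}}{k^{\epsilon/2}}=k^{1-\epsilon/4+o(1)}$, so actually the bound is $k-k^{1-\epsilon/4+o(1)}$, which is $\le k-k^{1-\epsilon/2+o(1)}$ only if... hmm, $1-\epsilon/4>1-\epsilon/2$, so $k^{1-\epsilon/4}>k^{1-\epsilon/2}$ and thus $k-k^{1-\epsilon/4}<k-k^{1-\epsilon/2}$, meaning my $\tau$ is \emph{smaller} than claimed. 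So I will instead push $\ell$ larger, $\ell\sim k^{\epsilon/2+o(1)}$ chosen so that the subtracted term is $k^{1-\epsilon/2+o(1)}$; this forces $n\sim k/\ell\cdot k=k^{2-\epsilon/2+o(1)}$, which overshoots — so the right move is to let $\ell$ be a growing power and balance $n\sim k^2/\ell$ against $k f_1/\ell$; with $f_1=f(k,\ell)=k^{o(1)}\sqrt\ell$ the subtracted term is $k\sqrt{\ell}\,k^{o(1)}/\ell=k^{1+o(1)}/\sqrt\ell$, and $n=k^{2+o(1)}/\ell$, so $\ell=k^{2-2+\epsilon}k^{o(1)}=k^{\epsilon+o(1)}$ — wait, $n\sim k^{2-\epsilon}$ means $\ell\sim k^{\epsilon}$, then subtracted term $\sim k^{1+o(1)}/k^{\epsilon/2}=k^{1-\epsilon/2+o(1)}$, exactly as claimed. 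Good; I had mis-set $\ell$. So take $\ell=k^{\epsilon+o(1)}$.

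The verifications that remain are routine: that the defining Diophantine relations $k=1+(\ell+f_1+1)(2m+1)$ and $n=1+(\ell+1)(m+2)(2m+1)$ can be satisfied exactly (handled by the integrality caveat in the corollary's preamble — one perturbs $\ell$, $m$, $f_1$ by $o$ of themselves and by $f_1$ one may take any value $\ge f(k,\ell)$, giving enough freedom); that $n\ge 2k$ holds for the chosen parameters (clear since $n\sim k^2/(\ell\cdot\text{const})\gg k$ when $\ell=k^{o(1)}$ in (i) and $\ell=k^{\epsilon}\ll k$ in (ii)); and that $(1-o(1)){n-1\choose k-1}=(1-o(1)){n\choose k}$ — which, as I noted, is false, so I will state the size bound as $(1-o(1)){n-1\choose k-1}$, matching Theorem~\ref{thmlb}, and note that this is the intended reading. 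The main obstacle is purely bookkeeping: correctly tracking the $k^{o(1)}$ factors coming from $f(k,\ell)$ and from the integrality perturbations, and pinning down the exact threshold constants ($28$ in (i), the $\frac14$) by optimizing the elementary function $a\mapsto\frac{2(a+1+\sqrt{1+2a})^2}{a}$; no new idea is needed beyond plugging into Theorem~\ref{thmlb}.
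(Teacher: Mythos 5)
Your proposal is correct and follows essentially the same route as the paper: substitute $\ell=\Theta(\log k)$ (for (i)) and $\ell=\Theta(k^{\epsilon})$ (for (ii)) with $f_1=f$ into Theorem~\ref{thmlb}, yielding exactly the paper's quantities $c=\frac{2(c_1+1+\sqrt{1+2c_1})^2}{c_1}$ and $c'=\frac{c_1}{c_1+1+\sqrt{1+2c_1}}$ in (i) (the paper checks $c_1=1$, giving $c\approx 27.9<28$ and $c'\approx 0.27>\frac14$) and the error term $k\ell^{-1/2+o(1)}=k^{1-\epsilon/2+o(1)}$ in (ii). Your observation that the stated size $(1-o(1))\binom{n}{k}$ must be read as $(1-o(1))\binom{n-1}{k-1}$ is also correct, since by the Erd\H{o}s--Ko--Rado theorem no intersecting family can exceed $\binom{n-1}{k-1}$, and this is the bound Theorem~\ref{thmlb} actually provides.
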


Denote by $M(n,k,\tau)$ the largest size of an intersecting family $\ff\subset {[n]\choose k}$ with $\tau(\ff)= \tau$.
\begin{prop}\label{prop1}
  Assume that $n\ge 2k$ and $\tau\ge 2$. Then $M(n,k,\tau-1)\ge M(n,k,\tau)$. In particular, the statements of Theorem~\ref{thmlb} and Corollary~\ref{corlb} also hold for each value of $\tau(\ff)$ that belongs to $[1,\tau']$, where $\tau'$ is the lower bound imposed on $\tau(\ff)$ in the corresponding statement.
\end{prop}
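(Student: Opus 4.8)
The plan is to prove the monotonicity $m(n,k,\tau-1)\ge m(n,k,\tau)$ by taking an extremal intersecting family $\ff\subset\binom{[n]}{k}$ with $\tau(\ff)=\tau$ and producing from it an intersecting family of the same size whose covering number is exactly $\tau-1$. The natural operation is a \emph{shifting}-type replacement: since $n\ge 2k$, the complement of any $F\in\ff$ has size $n-k\ge k$, so there is plenty of room to modify a single set. First I would pick any transversal $T$ of $\ff$ of size $\tau$ and any element $x\in T$; the family $\ff$ still needs $x$ in some sense, in that $\{F\in\ff: F\cap(T\setminus\{x\})=\emptyset\}$ is nonempty (otherwise $T\setminus\{x\}$ would be a cover of size $\tau-1$, contradicting $\tau(\ff)=\tau$). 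The idea is to replace the members of $\ff$ so as to decrease the covering number by one while keeping the family intersecting and of the same cardinality.

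The cleanest route is actually the following. Let $\ff$ be intersecting with $\tau(\ff)=\tau\ge 2$ and $|\ff|=m(n,k,\tau)$. Fix $F_0\in\ff$ and an element $y\in F_0$. Consider $\ff':=\big(\ff\setminus\{F_0\}\big)\cup\{F_0'\}$ where $F_0'$ is chosen appropriately, or more robustly: show that one can always find \emph{some} intersecting family $\G\subset\binom{[n]}{k}$ with $|\G|=|\ff|$ and $\tau(\G)=\tau-1$. One concrete construction: pick two elements $a,b\notin$ some well-chosen set and ``merge'' the roles so that a transversal of size $\tau-1$ appears; but the subtlety is that decreasing $\tau$ must be done without creating a transversal of size $\le\tau-2$ and without destroying the intersecting property. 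To control this, I would use the structure of minimum transversals: take a minimum transversal $T$, $|T|=\tau$, write $T=\{t_1,\dots,t_\tau\}$, and replace one of the $F\in\ff$ avoiding $T\setminus\{t_1\}$ by a set through $t_1$ that meets everything — iterating, one arrives at a family where $T\setminus\{t_1\}$ becomes a transversal, i.e.\ $\tau$ drops to $\tau-1$, while $|\ff|$ is unchanged because we only \emph{substitute} sets, never delete them. One must simultaneously verify that $\tau$ does not drop below $\tau-1$; this is the place where an extremality or a careful-choice argument is needed.

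The main obstacle is precisely the lower-bound side: it is easy to make $\tau$ drop, but one needs it to land exactly at $\tau-1$, not lower. One way around this is to prove the weaker-looking statement that $m(n,k,\tau')\ge m(n,k,\tau)$ for \emph{some} $\tau'$ with $1\le\tau'<\tau$ and then iterate, but that is not quite what is claimed. A cleaner fix: given an extremal $\ff$ with $\tau(\ff)=\tau$, add a single new set $G$ through a fixed element to force $\tau$ up to exactly $\tau-1$ after the merge — but since we want the \emph{same} size, the right statement to prove is existence of a family of size $\ge m(n,k,\tau)$ with covering number \emph{equal} to $\tau-1$, and the existence part can be handled by starting from the family attaining $m(n,k,\tau-1)$-type constructions or by a continuity/interpolation argument on nested families $\ff_1\supset\ff_2\supset\cdots$ with covering numbers hitting every value down to $1$. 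For the ``in particular'' clause, once the monotonicity $m(n,k,\tau-1)\ge m(n,k,\tau)$ is established, it follows immediately by induction that $m(n,k,j)\ge m(n,k,\tau')$ for all $j\le\tau'$, so any construction from Theorem~\ref{thmlb} or Corollary~\ref{corlb} with covering number $\ge\tau'$ yields, after trimming down to covering number exactly $j$, a family of size $(1-o(1))\binom{n}{k}$ for each $j\in[1,\tau']$; the $(1-o(1))$ factor is preserved since each step changes only $O(1)$ sets.
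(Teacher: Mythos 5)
Your high-level intuition --- modify the family one set at a time so that $\tau$ can only drop by one per step, then invoke a discrete intermediate-value argument --- is the right shape, and it is essentially what the paper does. But there is a genuine gap exactly where you acknowledge one: you never exhibit a concrete operation that simultaneously (a) preserves $|\ff|$, (b) preserves the intersecting property, (c) decreases $\tau$ by at most $1$ per step, and (d) provably terminates in a family with $\tau=1$. Your proposed move (``replace one of the $F$ avoiding $T\setminus\{t_1\}$ by a set through $t_1$ that meets everything'') is unverified on all counts: the new set must intersect every remaining member, must not already lie in $\ff$ (else the size drops), and nothing prevents $\tau$ from collapsing by more than one. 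The patches you then offer do not work: starting from ``the family attaining $m(n,k,\tau-1)$-type constructions'' is circular, and an interpolation along \emph{nested} families $\ff_1\supset\ff_2\supset\cdots$ destroys property (a), since proper subfamilies are strictly smaller, so that route cannot yield $m(n,k,\tau-1)\ge m(n,k,\tau)$.

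The paper's fix is to use operations already known to have properties (a)--(d): the shift $S_{i\leftarrow j}$ is a bijective substitution (so (a) holds), preserves the intersecting property, and the subfamily it replaces has covering number $1$ (all its members contain $j$), so if $T$ covers $S_{i\leftarrow j}(\ff)$ then $T\cup\{j\}$ covers $\ff$, giving (c). Since a shifted intersecting family need not be a star (e.g.\ $\aaa_t$ is shifted with $\tau=t+1$), shifting alone does not give (d); the paper completes the descent to $\tau=1$ with the transformations of Kupavskii and Zakharov, which turn any intersecting family into a subfamily of the full star and again replace only subfamilies of covering number $1$ at each step. Once such a size-preserving, intersecting-preserving sequence from covering number $\tau$ down to $1$ with unit steps is in hand, every value in $[1,\tau]$ is attained by some intermediate family, which gives both the monotonicity and the ``in particular'' clause. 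To repair your argument you must either prove (a)--(d) for your ad hoc substitution or import such a sequence of operations as the paper does.
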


Thus, we see a sharp cutoff: for, say, $n=k^{3/2}$ there are intersecting families $\ff\subset {[n]\choose k}$ with $\tau(\ff) = k-k^{3/4+o(1)}$ and that have size essentially ${n-1\choose k-1}$, while if we ask for families with $\tau(\ff) = k-\frac 12k^{1/2}$, then the largest such family has size at most $e^{-ck^{1/2}}{n\choose k}$. We think that this is an interesting question to determine the point at which this transition happens. As an example question, let us give the following. (We again omit integer parts for simplicity.)
\begin{prb}
 What is the smallest value of $\gamma$ such that  for $n = k^{3/2}$ we have $M(n,k,k-k^{\gamma})= o\big({n-1\choose k-1}\big)$ as $k\to \infty$?
\end{prb}
Our results imply that $\frac 12\le \gamma\le \frac 34$.
\subsection{Shifted families}
Many of the proofs of theorems on intersecting families rely on {\it shifting}, an extremely useful operation whose definition can be traced back to Erd\H os, Ko and Rado \cite{EKR} (see also \cite{Fra3}).

For a family $\ff\subset {[n]\choose k}$ and $1\le i<j\le n$ define the $(i\leftarrow j)$-shift $S_{i\leftarrow j}$ by
\begin{align*}
  S_{i\leftarrow j}(\ff) &= \big\{S_{i\leftarrow j}(A): A\in \ff \big\} \cup \big\{A: S_{i\leftarrow j}(A) \text{ and } A\in \ff \big\}, \ \ \ \ \text{where} \\
  S_{i\leftarrow j}(A) &=\begin{cases}
                           \tilde A:=(A\setminus \{j\})\cup\{i\}, & \mbox{if } A\cap \{i,j\} =\{j\}\\
                           A, & \mbox{otherwise}.
                         \end{cases}
\end{align*}
We say that $\ff$ is {\it shifted} if $S_{i\leftarrow j}(\ff) = \ff$ for all $1\le i<j\le n$. Alternatively, $\ff$ is shifted  if for all $F\in \ff$ and $1\le i<j\le n$ satisfying $i\notin F$, $j\in F$ the set $(F\setminus \{j\})\cup\{i\}$ is a member of $\ff$.

We show that $\aaa_{t-1}$ is maximal among shifted intersecting families with covering number at least $t$.

\begin{thm}\label{thmsh}
Suppose that $n\ge 2k$, $k\ge t>1$, $\ff\subset {[n]\choose k}$ is intersecting, $\tau(\ff)\ge t$. If $\ff$ is shifted then
\begin{equation}\label{eq1.3}
  |\ff|\le |\aaa_{t-1}|.
\end{equation}
\end{thm}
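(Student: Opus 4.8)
I would begin with two easy reductions. For a shifted family $\G\subseteq\binom{[n]}{k}$ with $n\ge k+t-1$, one has $\tau(\G)\ge t$ if and only if $[t,t+k-1]\in\G$: for the nontrivial direction, given a $(t-1)$-set $T$, its complement has $n-t+1\ge k$ elements, and the $k$ smallest of them form a set $G$ whose $i$-th smallest element is at most $t+i-1$ (since $|T\cap[1,t+i-1]|\le t-1$), so $G\preceq[t,t+k-1]\in\G$ gives $G\in\G$ and $G\cap T=\emptyset$. Secondly, $|\aaa_s(n,k)|$ is non-increasing in $s$ when $n\ge 2k$: comparing $\aaa_{s-1}$ with $\aaa_s$ set by set, the members of $\aaa_{s-1}\setminus\aaa_s$ are the $\{1\}\cup A\cup B$ with $A\in\binom{[2,k+s-1]}{s-1},\ B\in\binom{[k+s+1,n]}{k-s}$, and the members of $\aaa_s\setminus\aaa_{s-1}$ are the $\{k+s\}\cup G$ with $G\in\binom{[2,k+s-1]}{k-1}$, so
$$|\aaa_{s-1}(n,k)|-|\aaa_s(n,k)|=\binom{k+s-2}{s-1}\binom{n-k-s}{k-s}-\binom{k+s-2}{k-1}=\binom{k+s-2}{s-1}\Bigl(\binom{n-k-s}{k-s}-1\Bigr)\ge 0$$
(using $\binom{k+s-2}{k-1}=\binom{k+s-2}{s-1}$ and $n-k-s\ge k-s$). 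By the second fact it suffices to prove the sharper bound $|\ff|\le|\aaa_{\tau-1}|$ with $\tau:=\tau(\ff)\ge t\ge 2$.

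Write $\ff(\bar1)=\{F\in\ff:1\notin F\}$ and $\ff(1)=\{F\setminus\{1\}:1\in F\in\ff\}$, so $|\ff|=|\ff(1)|+|\ff(\bar1)|$. Because $\ff$ is shifted, $(B\setminus\{b\})\cup\{1\}\in\ff$ for all $B\in\ff(\bar1)$, $b\in B$; hence $\ff(1)$ contains the shadow of $\ff(\bar1)$, the families $\ff(1)$ and $\ff(\bar1)$ are cross-intersecting, every transversal of $\ff(1)$ is a transversal of $\ff$, and — since for $|T|=\tau-2$ the $(\tau-1)$-set $\{1\}\cup T$ is too small to be a transversal of $\ff$ — also $\tau(\ff(\bar1))\ge\tau-1$. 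By the first reduction $[\tau,\tau+k-1]\in\ff(\bar1)$, so $M:=\max\bigcup\ff(\bar1)\ge k+\tau-1$.

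Now split on $M$. If $M=k+\tau-1$, then $\ff(\bar1)\subseteq\binom{[2,k+\tau-1]}{k}$; since this last family has covering number $\tau-1$ and deleting any single set from it drops the covering number to $\tau-2$, the bound $\tau(\ff(\bar1))\ge\tau-1$ forces $\ff(\bar1)=\binom{[2,k+\tau-1]}{k}$, and then cross-intersection gives $|A\cap[2,k+\tau-1]|\ge\tau-1$ for every $A\in\ff(1)$, i.e. $\ff\subseteq\aaa_{\tau-1}$, as desired. If $M\ge k+\tau$, pick $F\in\ff(\bar1)$ with $\max F=M$ and shift it down to $\{2,\dots,k,M\}\in\ff$; then the whole down-set of this set, in particular $\{2,\dots,k,j\}$ for every $k<j\le M$, lies in $\ff(\bar1)$. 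This strongly restricts $\ff(1)$ (any $A\in\ff(1)$ avoiding $\{2,\dots,k\}$ must contain $\{k+1,\dots,M\}$, impossible once $M\ge 2k$) and also restricts $\ff(\bar1)$; I would combine these with the inductive estimate $|\ff(\bar1)|\le|\aaa_{\tau-2}(n-1,k)|$ — legitimate since $\tau(\ff(\bar1))\ge\tau-1$, with base case $\tau=2$ being the Hilton--Milner inequality for shifted families — to finish.

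The main obstacle is precisely this last case $M\ge k+\tau$: one must show that every "long" non-star member of $\ff$ always pays for itself through the cross-intersection constraint it imposes on $\ff(1)$, and do the bookkeeping so that $|\ff(1)|+|\ff(\bar1)|$ stays at or below $|\aaa_{\tau-1}(n,k)|$. The crude bounds $|\ff(1)|\le\binom{n-1}{k-1}-\binom{n-k-1}{k-1}$ (from every member of $\ff$ meeting $\{2,\dots,k,M\}$) and $|\ff(\bar1)|\le|\aaa_{\tau-2}(n-1,k)|$ do not by themselves close the gap, so the trade-off between the "reach" of $\ff(\bar1)$ and the admissible portion of $\ff(1)$ has to be quantified carefully; everything else above is routine.
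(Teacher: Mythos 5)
Your proposal has a genuine gap, and you acknowledge it yourself: the case $M\ge k+\tau$ is never closed. Everything up to that point is correct (the characterization of $\tau(\G)\ge t$ for shifted families via $[t,t+k-1]\in\G$, the monotonicity of $|\aaa_s|$ in $s$ for $n\ge 2k$, the reduction to the sharper bound $|\ff|\le|\aaa_{\tau-1}|$, and the complete treatment of the case $M=k+\tau-1$), but the plan for the remaining case --- combining the inductive bound $|\ff(\bar1)|\le|\aaa_{\tau-2}(n-1,k)|$ with a cross-intersection bound on $|\ff(1)|$ --- is exactly the part where, as you say, ``the crude bounds do not by themselves close the gap,'' and no mechanism is supplied to make the long non-star sets ``pay for themselves.'' As it stands this is a proof sketch with its hardest step missing, not a proof.

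The observation you needed is one step beyond the one you made. You correctly derive that $[t,t+k-1]\in\ff$, but shiftedness then forces the \emph{entire} clique ${[k+t-1]\choose k}$ to lie in $\ff$, since every $k$-subset of $[k+t-1]$ is dominated by $[t,t+k-1]$ in the shifting order. At that point the paper simply invokes the theorem of Frankl \cite{F65}: if $\ff\subset{[n]\choose k}$ is intersecting, $n\ge 2k>m\ge k$, and ${[m]\choose k}\subset\ff$, then $|\ff|\le{m\choose k}+\sum_{i=m-k+1}^{k-1}{m-1\choose i-1}{n-m\choose k-i}$, and this right-hand side is exactly $|\aaa_{m-k}|$; taking $m=k+t-1$ finishes the proof. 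That cited theorem is precisely the quantitative trade-off you were trying to establish by hand in the case $M\ge k+\tau$, so your route cannot be completed ``routinely'' --- it amounts to reproving \cite{F65} from scratch. If you want a self-contained argument, you should either prove that statement directly or restructure the induction around the clique ${[k+t-1]\choose k}$ rather than around the single set $[\tau,\tau+k-1]$.
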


It is interesting to compare the general and shifted cases. It is not difficult to check that $|\aaa_{t}| = (1-o(1)){n-1\choose k-1}$ for $t\le (1-\epsilon)\frac {k^2}n$, provided $\epsilon>0$ and $\frac {k^2}n\to \infty$. Indeed, among all sets containing $1$, any set that intersects $[2,k+t]$ in at least $t$ elements belongs to $\aaa_t$. If we take a random $(k-1)$-element subset of $[2,n]$, then the expected intersection of it with $[2,k+t]$ is $\frac{(k+t-1)(k-1)}{n-1}$, and a standard concentration inequality for hypergeometric distributions (cf. \eqref{eqconcent}) implies that almost all $(k-1)$-subsets of $[2,n]$ have intersection at least $(1-\epsilon)\frac{k^2}n$ with $[2,k+t]$.

On the other hand, concentration inequalities applied in the other direction imply that, for $n/k\to \infty$ and $t\ge (1+\epsilon)\frac{k^2}n$ the size of $\aaa_t$ is at most $e^{-c(\epsilon) k^2/n}{n\choose k}$, where $c(\epsilon)>0$ is some constant depending on $\epsilon$.

Taking a concrete example of $n=k^{3/2}$, we see that the  transition from large to exponentially small family sizes in the general and shifted cases happen at very different values of $\tau$. In the former case, we have an intersecting family with $|\ff| = (1-o(1)){n-1\choose k-1}$ for $\tau(\ff) = k-k^{3/4}$, while in the latter case the same can be guaranteed for $\tau(\ff) \sim k^{1/2}$ only.

We prove Theorem~\ref{thmsh} in the remainder of this subsection.
First, note that if $\ff$ satisfies the conditions of the theorem then $U:=\{t,t+1,\ldots, t+k-1\}$ is contained in $\ff$. Indeed, there must be a set $A\in \ff$ such that the smallest element in $A$ is at least $t$, otherwise $[t-1]$ is a cover. On the other hand, $\ff$ being shifted implies that $U\in \ff$. This, in turn, implies that ${[t+k-1]\choose k}\subset \ff$.

Let us recast the following theorem of the first author.
\begin{thm}[\cite{F65}]
  Let $n\ge 2k>m\ge k\ge 1$.  Let $\ff\subset {[n]\choose k}$ be an intersecting family such that ${[m]\choose k}\subset \ff$. Then \begin{equation*}\label{eq2.1}
    |\ff|\le {m\choose k}+\sum_{i=m-k+1}^{k-1}{m-1\choose i-1}{n-m\choose k-i}.
  \end{equation*}
\end{thm}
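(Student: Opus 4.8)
The plan is to reduce to the case of a shifted $\ff$ and then run an induction on $n$. Shifting is harmless here: each operation $S_{i\leftarrow j}$ preserves intersectingness and $|\ff|$, and it also preserves the hypothesis $\binom{[m]}{k}\subseteq\ff$. Indeed, if $i\le m$ then no member of $\binom{[m]}{k}$ can be moved by $S_{i\leftarrow j}$, because $j\in A\in\binom{[m]}{k}$ forces $(A\setminus\{j\})\cup\{i\}\in\binom{[m]}{k}\subseteq\ff$, and by the same token no new member of $\binom{[m]}{k}$ is created; if $i>m$ the shift does not meet $\binom{[m]}{k}$ at all. Hence we may assume $\ff$ is shifted.

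Next I record the basic structural fact that every $F\in\ff$ satisfies $|F\cap[m]|\ge m-k+1$ — otherwise $[m]\setminus F$ would contain a set $G\in\binom{[m]}{k}\subseteq\ff$ with $G\cap F=\emptyset$. Writing $\ff_i:=\{F\in\ff:|F\cap[m]|=i\}$, this gives $\ff=\binom{[m]}{k}\,\sqcup\,\bigsqcup_{i=m-k+1}^{k-1}\ff_i$ together with the crude bound $|\ff_i|\le\binom{m}{i}\binom{n-m}{k-i}$; the whole point of the theorem is to improve $\binom{m}{i}$ to $\binom{m-1}{i-1}$. It is worth noting that the claimed bound is exactly the size of $\binom{[m]}{k}\cup\{F:m\in F,\ m-k+1\le|F\cap[m]|\le k-1\}$, a family one checks directly to be intersecting, so the bound is best possible.

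Now I would induct on $n$ with $k,m$ fixed. The base case $n=2k$ is free: a short Vandermonde-and-Pascal computation gives that the claimed bound equals $\binom{2k-1}{k-1}=\tfrac12\binom{2k}{k}$ (equivalently, the extremal family above contains exactly one set from each complementary pair of $\binom{[2k]}{k}$), which is the trivial ceiling for any intersecting subfamily of $\binom{[2k]}{k}$. For $n>2k$ I split off the largest element: setting $\ff(\bar n):=\{F\in\ff:n\notin F\}$ and $\ff(n):=\{F\setminus\{n\}:n\in F\in\ff\}$ we have $|\ff|=|\ff(\bar n)|+|\ff(n)|$. Here $\ff(\bar n)\subseteq\binom{[n-1]}{k}$ is intersecting, shifted, and contains $\binom{[m]}{k}$, so the inductive hypothesis bounds it by $B(n-1,m)$, where $B(n,m)$ denotes the claimed right-hand side; and $\ff(n)\subseteq\binom{[n-1]}{k-1}$ is itself intersecting — if $G_1,G_2\in\ff(n)$ were disjoint, then since $|G_1\cup G_2|=2k-2<n-1$ we may pick $a\in[n-1]\setminus(G_1\cup G_2)$ and, using shiftedness, replace $n$ by $a$ in $G_1\cup\{n\}\in\ff$ to obtain a set of $\ff$ disjoint from $G_2\cup\{n\}\in\ff$, a contradiction — and $\ff(n)$ is cross-intersecting with $\ff(\bar n)$. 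Using $B(n,m)=B(n-1,m)+\sum_{i=m-k+1}^{k-1}\binom{m-1}{i-1}\binom{n-1-m}{k-i-1}$ (Pascal applied to $\binom{n-m}{k-i}$), the induction closes once we show $|\ff(n)|\le\sum_{i=m-k+1}^{k-1}\binom{m-1}{i-1}\binom{n-1-m}{k-i-1}$, i.e.\ that the link is no bigger than the number of $(k-1)$-subsets $G$ of $[n-1]$ with $m\in G$ and $|G\cap[m]|\ge m-k+1$.

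The hard part is exactly this last inequality. Cross-intersectingness of $\ff(n)$ with $\binom{[m]}{k}\subseteq\ff(\bar n)$ only forces the transversal condition $|G\cap[m]|\ge m-k+1$ on the members $G$ of $\ff(n)$, which merely recovers the crude count with $\binom{m}{\cdot}$'s in place of $\binom{m-1}{\cdot}$'s; to bring it down to the ``$m\in G$'' count one must additionally exploit the cross-intersection of $\ff(n)$ with all of $\ff(\bar n)$ together with the fact that $\ff(n)$ is itself an intersecting family. My plan would be to isolate this as an auxiliary statement about cross-intersecting pairs $(\aaa,\bb)$ in $\binom{[N]}{k-1}\times\binom{[N]}{k}$ with $\bb$ intersecting and $\binom{[m]}{k}\subseteq\bb$, and to prove it by the same delete/link recursion; carrying the cross-intersecting bookkeeping — quantifying how $\aaa$ must shrink as $\bb$ grows — and keeping the nested inductions consistent is where I expect the real difficulty to concentrate.
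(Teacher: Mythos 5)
This theorem is quoted in the paper from \cite{F65} without proof, so there is nothing internal to compare against; judging your argument on its own terms, it is not complete. The reduction to shifted families, the observation that every $F\in\ff$ meets $[m]$ in at least $m-k+1$ elements, the base case $n=2k$ (where the bound is indeed $\binom{2k-1}{k-1}$), and the decomposition $|\ff|=|\ff(\bar n)|+|\ff(n)|$ with the Pascal identity $B(n,m)-B(n-1,m)=\sum_{i}\binom{m-1}{i-1}\binom{n-1-m}{k-i-1}$ are all fine, as is your verification that $\ff(n)$ is intersecting for shifted $\ff$. But the induction does not close: everything hinges on the inequality $|\ff(n)|\le\sum_{i=m-k+1}^{k-1}\binom{m-1}{i-1}\binom{n-1-m}{k-i-1}$, and you explicitly do not prove it. As you yourself note, the conditions you have actually extracted (each $G\in\ff(n)$ satisfies $|G\cap[m]|\ge m-k+1$) only give the slices with coefficient $\binom{m}{i}$ rather than $\binom{m-1}{i-1}$; gaining the factor $i/m$ on each slice is precisely the content of the theorem, and no mechanism for it is supplied.

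The proposed fix --- ``isolate this as an auxiliary statement about cross-intersecting pairs and prove it by the same delete/link recursion'' --- is a plan, not a proof. The auxiliary statement is never formulated, so there is no induction hypothesis strong enough to run the nested recursion, and it is not clear that a statement of the shape you describe (a $(k-1)$-uniform intersecting $\aaa$ cross-intersecting with a $k$-uniform intersecting $\bb\supseteq\binom{[m]}{k}$) is self-propagating under deletion of the last element. The known arguments for this bound work quite differently: they partition $\ff\setminus\binom{[m]}{k}$ according to the trace $F\cap[m]$ and exploit that for complementary traces $A$ and $[m]\setminus A$ the corresponding parts of $\ff$ must be cross-intersecting outside $[m]$, which is where the improvement from $\binom{m}{i}$ to $\binom{m-1}{i-1}$ (via a cross-intersecting inequality applied to the pairs $(\ff_i,\ff_{m-i})$) actually comes from. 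Until the link inequality is established by some such argument, the proof has a genuine gap at its central step.
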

It is not difficult to see that the right hand side is actually $|\aaa_{m-k}|$.

Applying this theorem with $m = k+t-1$ to $\ff$, we conclude that $|\ff|\le |\aaa_{t-1}|$.

\section{Proofs}
\begin{proof}[Proof of Theorem~\ref{thmub}]
  For any $X$, let us denote $\ff(X):= \{F\in \ff: X\subset F\}$. Put
  $$\alpha_i:= \max\Big\{\frac{|\ff(X)|}{{n-i\choose k-i}}\ :\  X\in {[n]\choose i}\Big\}.$$
  In words, $\alpha_i$ is the largest density of $\ff(X)$ over all sets $X$ of size $i$. In particular, $\alpha_0$ is the density of $\ff$. Note that $\alpha_i\le 1$ for any $i$ and also that $\alpha_i\le \alpha_{i+1}$ via simple averaging. Indeed, let $X$ be such that $|X|= i$ and $\alpha_i = \frac{|\ff(X)|}{{n-i\choose k-i}}$. Next, we count the sets from $\ff$ containing $X$ in two ways. On the one hand, it is  $|\ff(X)|$, and, on the other hand, we fix an extra element $y\in [n]\setminus X$ and count the sizes of $\ff(X\cup \{y\})$. There are $n-|X|$ such choices of $y$, and each set from $\ff(X)$ is included into $k-|X|$ such families. We get $|\mathcal F(X)| = \frac{1}{k-i}\sum_{y\in [n]\setminus X} |\mathcal F(X\cup\{y\})|\le \frac{n-i}{k-i}|\ff(X\cup \{y_{max}\})|, $ where $y_{max}$ is such that $|\ff(X\cup \{y_{max}\}|$ is the largest. Renormalizing, we get
  $$\alpha_i = \frac{|\ff(X)|}{{n-i\choose k-i}}\le \frac{|\ff(X\cup \{i_{max}\})|}{{n-i-1\choose k-i-1}}\le \alpha_{i+1}.$$

   We claim that for any $|X|<\tau$ there exists a set $A\in \ff$ disjoint with $X$ such that
  \begin{equation}\label{eq1}\ff(X)\subset \bigcup_{j\in A} \ff(X\cup\{j\}).\end{equation}
  Indeed, since $X$ is not a cover of $\ff$, there should be a set $A\in \ff$ that is disjoint with $X$. But since every set $F\in \ff$ intersects $A$, we have the inclusion above.

Thus, \eqref{eq1} implies ${n-i\choose k-i}\alpha_i\le k{n-i-1\choose k-i-1}\alpha_{i+1}$ for each $i<\tau$, which is equivalent to $$\alpha_i\le \frac{k(k-i)}{n-i}\alpha_{i+1}.$$
This inequality is a non-trivial improvement of $\alpha_i\le \alpha_{i+1}$ whenever the fraction in front of $\alpha_{i+1}$ is at most $1$, which happens for $i\ge \frac{k^2-n}{k-1} = k+1-\frac{n-1}{k-1}$. Note that the upper integer part of the last number is exactly $ \beta$ from the formulation of the theorem. Applying the displayed bound for $i= \beta,\ldots, \tau-1$, we get that
$$\alpha_\beta\le \prod_{i=\beta}^{\tau-1} \frac{(k-i)k}{n-i} \alpha_\tau\le \prod_{i=\beta}^{\tau-1} \frac{(k-i)k}{n-i}.$$

Now, note that $\alpha_0\le \alpha_1\le\ldots \le \alpha_{\beta}$. Thus, we have $|\ff|/{n\choose k} = \alpha_0\le \alpha_\beta$, and the desired bound is obtained by combining with the displayed inequality.
\end{proof}

\begin{proof}[Proof of Corollary~\ref{corub}]
  We only have to bound from above the product $\prod_{i=\beta}^{k-c\frac nk} \frac{(k-i)k}{n-i}$, where $\beta := k+1-\frac{n-1}{k-1}$. Recall that each term of this product is smaller than $1$. In the first inequality below we use this fact and that $\frac{n-1}{k-1}\ge \frac nk$. In the third inequality we use that $\big(\frac ne\big)^n\le n!\le n\cdot \big(\frac ne\big)^n$, valid for all $n\ge 5$. We get
  $$\prod_{i=\beta}^{k-c\frac nk} \frac{(k-i)k}{n-i}\le \prod_{i=k+1-\frac nk}^{k-c\frac nk} \frac{(k-i)k}{n-i}=\frac{k^{(1-c)\frac nk}(\frac nk-1)!}{\big(c\frac nk-1\big)!\prod_{i=k+1-\frac nk}^{k-c\frac nk}(n-i)} $$
  $$\le C\frac{k^{(1-c)\frac nk} \big(\frac{n}{k}\big)!}{\big(c\frac{n}{k}\big)!(n-k)^{(1-c)\frac nk}}\le C\Biggl(\frac{k }{n-k}\Biggr)^{(1-c)\frac nk}\cdot \frac nk\frac{\big(\frac n{ek}\big)^{n/k}}{\big(\frac {cn}{ek}\big)^{cn/k}}$$
  $$= C\frac nk \Biggl(\frac{n}{n-k}\Biggr)^{(1-c)\frac nk}\big(c^{-c}e^{(c-1)}\big)^{\frac nk} = e^{(c-1-c\log c+o(1))\frac nk},$$

%  the product is at least Using   we can bound it as follows
 % $$\prod_{i=\beta}^{k-c\frac nk- 1} \frac{(k-i)k}{n-i} = \frac{k^{k-c\frac nk-\beta}(k-\beta)!}{\big(c\frac nk\big)!\prod_{i=\beta}^{k-c\frac nk-1}(n-i)}\le \frac{k^{\frac{n-1}{k-1}+1-c\frac nk}\big(\frac{n-1}{k-1}+1\big)!}{\big(c\frac nk\big)!(n-k+1)^{\frac{n-1}{k-1}+1-c\frac nk}}$$

  %$$k^{\frac{n}{k(k-1)}+1-\frac 1{k-1}}k^{(1-c)\frac nk}$$

 % $$=(C_1+o(1))\frac{\sqrt{\frac{n-1}{k-1}+1}}{\sqrt{cn/k}}
 %  \frac{k^{\frac{n-1}{k-1}+1-c\frac nk}\big(\frac{\frac{n-1}{k-1}+1}e\big)^{\frac{n-1}{k-1}+1}} {\big(\frac{c\frac nk}e\big)^{c\frac nk}(n-k+1)^{\frac{n-1}{k-1}+1-c\frac nk}} $$

 % $$=(C_2+o(1))
 % \Big(\frac{c\frac{n}k \cdot k}{e(n-k+1)}\Big)^{\frac{n-1}{k-1}+1-c\frac nk} \Big(\frac{\frac{n-1}{k-1}+1}{cn/k}\Big)^{\frac{n-1}{k-1}+1} $$

%  $$=(C_2+o(1)) c^{-cn/k} e^{\frac{n-1}{k-1}+1-c\frac nk}
%  \Big(\frac{n}{n-k+1}\Big)^{\frac{n-1}{k-1}+1-c\frac nk} \Big(\frac{k\frac{n-1}{k-1}+k}{n}\Big)^{\frac{n-1}{k-1}+1} $$

%  $$\le C_1\frac{k^{(1-c+\frac{1}{k-1})\frac nk}\big(\frac{n}{k}\big)!}{\big(c\frac{n}{k}\big)!(n-k)^{(1-c)\frac nk}}\le C_1\frac nk\Biggl(\frac{k \cdot \frac nk}{n-k}\Biggr)^{(1-c)\frac nk}\cdot k^{\frac n{k(k-1)}}\cdot \Big(\frac{\frac nk}{c\frac nk}\Big)^{c\frac nk} e^{(c-1)\frac nk}$$$$\le C\frac nk k^{\frac n{k(k-1)}}\big(c^{-c}e^{(c-1)}\big)^{\frac nk} = e^{(c-1-c\log c+o(1))\frac nk},$$
  where $C>1$ is some constant. We only used that  $cn\ge 5k$ in the third inequality. % (we can take $C = 10$ for $k>100$ and $n>10k$ in these calculations).
\end{proof}

\begin{proof}[Proof of Theorem~\ref{thmlb}] Put $s= (\ell+1)(m+2)$ and note $k>s>2(\ell+1)$.

Fix $2m+1$ pairwise disjoint sets $W_1,\ldots, W_{2m+1}\in {[2,n]\choose s}$. Let $\G_1\subset{[n]\choose k}$ be the family of all $k$-element subsets of $[n]$ that contain $1$ and intersect each $W_i$ in at least $\ell+1$ elements, i.e.
$$\mathcal G_1 = \Big\{\{1\}\sqcup B: B\in {[2,n]\choose k-1},\ |B\cap W_i|\ge \ell+1, \text{ for each }i\in [2m+1]\Big\}.$$
Let $\G_2$ be the family of all $k$-element subsets of $n$ that for some $i$ contain at least $s-\ell$ elements in $W_i$ and at least $\ell+1$ element in each $W_j$, where $j = i+1,\ldots, i+m$ and indices are taken modulo $2m+1$, i.e.,
{\small $$\mathcal G_2 = \Big\{A\in {[2,n]\choose k}: |A\cap W_i|\ge s-\ell \text{ for some }i\in [2m+1]\text{ and } |A\cap W_{i+j}|\ge \ell+1, \text{ for each }j\in [m]\Big\}.$$}
We note that, first, $n-1 = (2m+1)|W_i|$, and thus the sets $W_i$ will fit into $[2,n]$. Second, note that $k\ge1+(\ell+1)(2m+1) =(s-\ell)+(\ell+1)m$, and thus the families $\G_1,\G_2$ are defined correctly and uniquely.

Put $\G:=\G_1\cup \G_2$. Let us show that $\G$ is intersecting. Indeed, any two sets in $\G_1$ intersect because they all contain $1$. A set  $A\in\G_1$ intersects a set $B\in \G_2$ because there is an $i$ such that $|B\cap W_i|\ge s-\ell = |W_i|-\ell$, and $|A\cap W_j|\ge \ell+1$ for any $j$, in particular $|A\cap W_i|\ge \ell+1$. If $A\cap B\cap W_i$ is empty then $|W_i|\ge |A\cap W_i|+|B\cap W_i|\ge s+1,$ which contradicts $|W_i| = s$. Finally, consider two sets $B,C\in \G_2$. Assume that $|B\cap W_i|\ge s-\ell$ and $|C\cap W_j|\ge s-\ell$. Then either $i-j$ or $j-i$ belongs to  $\{0,\ldots,m\}$ modulo $2m+1$. Assume that the former holds. But then $|C\cap W_i|\ge \ell+1$, and arguing similarly, $B\cap C\cap W_i\ne \emptyset.$

Next, we aim to determine $\tau(\G)$. Let $C$ be a subset of $[n]$ with $|C|\le k$. We claim the following.
\begin{cla}\label{cla15}
  $C$ intersects all the members of $\mathcal G_1$ if and only if $C$ contains $1$ or $|C\cap W_i|\ge s-\ell$ for some $i\in [2m+1]$.
\end{cla}
\begin{proof}
  Suppose that $C$ does not intersect all members of $\G_1$. Then there exists at least one $A\in \G_1$ such that $A\cap C = \emptyset$. This means that $1\notin C$. At the same time, for each $i\in [2m+1]$ we have $|A\cap W_i|\ge \ell+1$. We get $s = |W_i|\ge |A\cap W_i|+|C\cap W_i|$, and so $|C\cap W_i|\le s-\ell-1$ for each $i\in [2m+1]$. Conversely, suppose $C$ does not contain $1$ and $|C\cap W_i|\le s-\ell-1$ for each $i\in [2m+1]$. Choose for each $i\in [2m+1]$, $B_i\subset W_i\setminus C$ with $|B_i|\ge \ell+1$ and construct a set $B\subset [2,n]$, where $\sqcup_{i=1}^{2m+1}B_i\subset B$, $|B|=k-1$ and $B\setminus \sqcup_{i=1}^{2m+1}W_i$ is disjoint from $C\setminus\sqcup_{i=1}^{2m+1}W_i$. Here $\{1\}\sqcup B\in \G_1$ and it is disjoint from $C$. This establishes the claim.
\end{proof}

\begin{cla}\label{cla16}
  $C$ intersects all the members of $\G_2$ if and only if for each $i\in[2m+1]$ such that $|C\cap W_i|\le \ell$ there exists $j\in [m]$ such that % $|C\cap W_i|\ge \ell+1$ for each $i\in [2m+1]$ or there exists at least one $i\in [2m+1]$ such that
   $|C\cap W_{i+j}|\ge s-\ell.$
\end{cla}
\begin{proof}
  Suppose that $C$ does not intersect some $A\in \G_2$. Since $A\in\G_2$ then there is an $i$ such that $|A\cap W_i|\ge s-\ell$ and $|A\cap W_{i+j}|\ge \ell+1$ for each $j\in[m]$. Arguing as before we get that $|C\cap W_i|\le \ell$ and $|C\cap W_{i+j}|\le s-\ell -1$ for each $j\in [m]$. Conversely, suppose that there is an $i$ such that $|C\cap W_i|\le \ell$ and $|C\cap W_{i+j}|\le s-\ell-1$ for each $j\in[m]$. Choose $B_i\subset W_i\setminus C$ with $|B_i|\ge s-\ell$ and $B_{i+j}\subset W_{i+j}\setminus C$ with $|B_{i+j}|\ge \ell+1$. Construct a set $B\subset [2,n]$, where $\sqcup_{j=0}^{m}B_{i+j}\subset B$, $|B|=k-1$ and $B\setminus \sqcup_{i=0}^{m}W_i$ is disjoint from $C\setminus\sqcup_{i=0}^{m}W_i$. Here $B\in \G_1$ and it is disjoint from $C$. This establishes the claim.
\end{proof}

%In order to cover any set from $\G_1$ with at most $k$ elements, it is necessary and sufficient that the cover either contains $1$ or at least $s-\ell$ elements from one of $W_i$. This is necessary since, if a potential cover $C$ fails to satisfy this property, we can choose a set $F$ such that $F\cap C = \emptyset,$ $1\in F$ and $|F\cap W_i|\ge \ell+1$ (we also use that $n>2k$ and $|C|\le k$ here). But, by definition of $\G_1$, $F\in \G_1$. In order to cover with at most $k$ elements any set from $\G_2$  that contains at least $s-\ell$ elements from $W_i$, it is necessary and sufficient to include in the cover either $\ell+1$ elements from $W_i$ or $s-\ell$ elements from $W_j$ for some $j=i+1,\ldots, i+m$ (the indices are again modulo $2m+1$).

\begin{cla}\label{cla17} If $C$ is a cover of minimal size %(i.e., $C$ is a cover but no proper subset of $C$ is a cover)
then it should satisfy one of the following:
\begin{itemize}
  \item[(i)] $C=\{1\}\sqcup \Big(\bigsqcup_{i=1}^{2m+1}C_i\Big),$ where $C_i\in {W_i\choose \ell+1}$. %contains element $1$ and $\ell+1$ elements from each $W_i$, $i=1,\ldots, 2m+1$
  \item[(ii)] $C = \bigsqcup_{i=0}^m C_{i+j}$ where $C_i\in {W_i\choose s-\ell}$ and $C_{i+j}\in {W_{i+j}\choose \ell+1}$ for each $j\in [m]$ %set that contains $s-\ell$ elements in $W_i$ and $\ell+1$ elements from each $W_j$, $j=i+m+1,\ldots, i+2m$.
  \item[(iii)] $C = C_i\sqcup C_{i+m+1}$, where $C_j\in {W_j\choose s-\ell}$ for $j = i,i+m+1$.%a set that contains $s-\ell$ elements in $W_i$ and $s-\ell$ elements in $W_{i+m+1}$.
\end{itemize}
\end{cla}
\begin{proof}
The first thing to note is that each of these three types of sets are actually covers by Claims~\ref{cla15},~\ref{cla16}. In order to check the sufficient condition from Claim~\ref{cla16} for (ii), note that for any $j'$, $m<j'\le 2m$ we put $j = 2m+1-j'$, $j\le m$, and get that $C\cap W_{i+j'+j} = C\cap W_i$, $|C\cap W_i|\ge s-\ell$. In order to check the sufficient condition from Claim~\ref{cla16} for (iii), note that for any $i'\in [2m+1]$ we can find $j\in [0,m]$ such that $i'+j\in \{i, i+m+1\}$, where indices are as usual taken modulo $2m+1$.

  Assume that for all $i\in [2m+1]$ we have $|C\cap W_i|\le s-\ell$. Then by Claim~\ref{cla15} $1\in C$ and by Claim~\ref{cla16} some set $C_i\subset C\cap W_i$ must satisfy $|C_i|=\ell+1$ for all $i$. %The set $\{1\}\sqcup \bigsqcup_{i=1}^{2m+1}C_i$ is a cover
  By minimality, %Claims~\ref{cla15},~\ref{cla16}, and thus,
  $C=\{1\}\sqcup \bigsqcup_{i=1}^{2m+1}C_i$. Thus, $C$ is as in (i).

Next, assume that for some $i$ and a set $C_i\subset C\cap W_i$ we have $|C_i|=s-\ell$. If for each $j\in [m]$ we have $|C\cap W_{i+j}|\ge \ell +1$ then by minimality we get an example (ii). Otherwise, there is a $j\in [m]$ such that $|C\cap W_{i+j}|\le \ell$. Then by Claim~\ref{cla16} there is $j'\le 2m$ such that $|C\cap W_{i+j'}|\ge s-\ell$. By minimality, $C\cap W_{i'} = \emptyset$ for all $i'\in [2m+1]\setminus \{i, i+j'\}$. If $j'\ne m+1$ then the condition of Claim~\ref{cla16} fails for either $i' =i+1$ or $i'= i+j'+1$. Thus, $j' = m+1$ and $C$ must be as in (iii).
% $|C\cap W_{i+m+1}|\ge s-\ell$ then there is $C_{m+i+1}\subset C\cap W_{i+m+1}$ such that $|C_{i+m+1}|=s-\ell$. The set $C_i\sqcup C_{i+m+1}$ intersects all sets $G\in \G$ by Claims~\ref{cla15},~\ref{cla16}. In order to check the sufficient condition from Claim~\ref{cla16}, note that for any $i'$ we can find $j\in [0,m]$ such that $i'+j\in \{i, i+m+1\}$, where indices are as usual taken modulo $2m+1$. We get that $C = C_i\sqcup C_{i+m+1}$ by minimality.
\end{proof}
%From these two observations, it is not difficult to conclude that one of the following covers should be the smallest:

%Let us show that any other cover has size at least that of one of the covers from (i)--(iii). Let $C$ be a cover. We first derive properties of $C$ from the fact that $C$ is a cover of $\G_2$.
%Assume that $|C\cap W_i|\le \ell$ for some $i$. Then, by the property above, $|C\cap W_j|\ge s-\ell$ for some $j=i+1,\ldots, i+m$. Fix such $j$. If there is another $j'$ such that $|C\cap W_{j'}|\ge s-\ell,$ then $C$ has size at least that of the cover from (iii), so we may assume that such $j$ is unique. Then, for each $i' = j+1,\ldots, j+m$ we must have $|C\cap W_{i'}|\ge \ell+1$, and $C$ contains a cover of the form (ii). Finally, assume that $|C\cap W_i|\ge \ell+1$ for all $i$. If one of these inequalities is strict then we get a cover of at least the size of the cover from (i). Assume that all inequalities are equalities. Then, since $s>2(\ell+1)$, the set $S$ has to have at least $1$ more element in order to hit all sets from $\G_1$. Thus, it again has size at least that of the cover from (i).

We get that $$\tau(\G) = \min \{1+(\ell+1)(2m+1),(s-\ell)+(\ell+1)m,2(s-\ell)\} = 1+(\ell+1)(2m+1).$$ Note that $\tau(\G)-1 =\frac{\ell+1}{\ell+f_1(k,\ell)+1}(k-1)$ and thus
$$\tau(\G)\ge \frac{\ell+1}{\ell+f_1(k,\ell)+1}k -\frac{\ell+1}{\ell+f_1(k,\ell)+1}+1> \frac{\ell+1}{\ell+f_1(k,\ell)+1}k$$

The final step is to lower bound the size of $\G$. Let $\eta$ be a random variable, equal to the intersection of a uniformly random $(k-1)$-element subset of $[2,n]$ with a given set that is a subset of $[2,n]$ of size $s$.
Note that $$\E \eta =\frac{s(k-1)}{n-1} = \ell+f_1(k,\ell)+1.$$ For the rest of the paragraph, assume that $f_1(k,\ell)$ is chosen so that \begin{equation}\label{eqass}\Pr[\eta\ge \ell+1]\ge 1-o\Big(\frac 1{2m+1}\Big)l.\end{equation} For each $i\in[2m+1]$ let $\eta_i$ be the intersection size of a uniformly random $(k-1)$-element subset of $[2,n]$ with $W_i$. Note that $\eta_i$ is distributed as $\eta$. Using union bound, we can conclude that with probability $1-(2m+1)o(\frac 1{2m+1}) = 1-o(1)$ we have $\eta_i\ge \ell+1$ for each $i\in [2m+1]$. %a random $(k-1)$-element subset of $[2,n]$ will intersect each of $W_1,\ldots, W_{2m+1}$ in at least $\ell+1$ elements.
In counting terms, all but $o\big({n-1\choose k-1}\big)$ sets $G\in {n-1\choose k-1}$ intersect each of $W_1,\ldots, W_{2m+1}$ in at least $\ell+1$ elements. For each such set $G$, the set $\{1\}\cup G$ is included in $\G_1$, and so  $|\G_1| = (1-o(1)){n-1\choose k-1}$. Thus the conclusion of the theorem is valid for the family $\G$, provided that \eqref{eqass} holds true.

We are only left to verify that the choice of $f_1(k,\ell)$ allows to guarantee the validity of the assumption \eqref{eqass}. Note that $\eta$ is distributed according to a hypergeometric distribution, and the following inequality holds (cf. \cite[Theorem~2.10 and~2.1]{JLR}):
\begin{equation}\label{eqconcent}\Pr[\eta\le \E\eta-t]\le e^{-\frac{t^2}{2\E\eta}}.\end{equation}
In our case, $\E\eta = \ell+f_1(k,\ell)+1$ and we substitute $t = f_1(k,\ell)+1$. Note that $2m+1  =o(k)$,\footnote{this is clear from the form of $f(k,\ell)$} and thus, it is sufficient to show that
$$e^{-\frac{(f_1(k,\ell)+1)^2}{2(\ell+f_1(k,\ell)+1)}} \le \frac 1k.$$
This is implied by $\frac{(f_1(k,\ell))^2}{2(\ell+f_1(k,\ell))}\ge \log k$. Rewriting and solving the resulting quadratic inequality, we get that the inequality holds for $f_1(k,\ell)\ge \log k+\sqrt{\log^2 k+2\ell \log k} = f(k,\ell)$.
\end{proof}

\begin{proof}[Proof of Corollary~\ref{corlb}]
  (i) If we substitute $\ell=c_1\log k$ with $c_1\ge 1$ and $f_1(k,\ell) = f(k,\ell)$ in Theorem~\ref{thmlb}, we get that $f(k,\ell)=\big(1+\sqrt{1+2c_1}\big)\log k$ and thus $\tau(\ff)>\frac{c_1}{c_1+1+\sqrt{1+2c_1}}k$. At the same time, $\frac n{k^2} = (1+o(1))\frac{\ell}{2(\ell+f(k,\ell))^2} = (1+o(1))\frac{c_1}{2(c_1+1+\sqrt{1+2c_1})^2\log k}$. In particular, for $c_1 = 1$ this gives
$\frac n{k^2}> \frac 1{28 \log k}$ and $\tau(\ff)>\frac 1 4 k.$

(ii) If we apply Theorem~\ref{thmlb} with $\ell\sim \frac 12 k^{\epsilon}$ and  $f_1(k,\ell) = f(k,\ell)$ then we get that $f(k,\ell) \sim \sqrt{2\ell \log k}$. Next, $\frac{n}{k^2} =(1+o(1))\frac{1}{2\ell} = (1+o(1))k^{-\epsilon}.$ Finally,
$$\tau(\ff)\ge \frac{\ell}{\ell+(1+o(1))\sqrt{2\ell\log k}} k = (1-\ell^{-1/2+o(1)})k = \big(1-k^{-\epsilon/2+o(1)}\big)k. \qedhere $$
\end{proof}

\begin{proof}[Sketch of the proof of Proposition~\ref{prop1}]

  In order to prove this statement, it is sufficient  to show that we can transform each intersecting family $\ff$ with $\tau(\ff) =\tau$  into a subfamily of $\aaa:=\{A\in{[n]\choose k}: 1\in A\}$, such that
  that: (i) the size of the family at each step remains the same; (ii) the family stays intersecting at each step; (iii) the value of $\tau(\ff)$ can decrease by at most $1$ at each step. That is, the argument is using `discrete continuity': along this series of transformations, the intermediate families take all possible values of the covering number between $1$ and $\tau(\ff)$.

  First, we note that these three properties are satisfied by shifting. Indeed, the only non-standard property of shifting is (iii), and this easily follows from the fact that the family $\ff\setminus S_{j\to i}(\ff)$ is contained in the family $\{F: j\in F\}$, and thus has %that we remove when applying an $(j\to i)$-shift
  has covering number $1$. Thus, we can start with a family $\ff$ guaranteed by Theorem~\ref{thmlb} and apply $(j\to i)$-shifts to it until we either reach the desired value of $\tau(\ff)$ or the family is shifted.

  Next, if we have not yet reached the desired value of the covering number, we can use the transformations that were used in the paper by Kupavskii and Zakharov \cite{KZ} and that allowed to gradually transform each intersecting family into a subfamily of $\aaa$ while maintaining (i)-(iii). Note that (iii) was not explicitly verified in \cite{KZ}, but it follows immediately from the definition of the transformation: at each step we replace a certain subfamily of $\ff$ that has covering number $1$ by another subfamily (as it is the case for shifting, which we have seen above). Thus, the covering number cannot decrease by more than $1$ at any step (as in the case of shifts, it actually cannot change by more than 1 at each step).

The second part of the statement follows immediately from the first part.
\end{proof}

{\sc Acknowledgements: } We thank the referees for carefully reading the text and suggesting numerous changes that helped to improve the presentation of the paper. This work was supported by a grant for research centers in the field of artificial intelligence, provided by the Analytical Center for the Government of the Russian Federation in accordance with the subsidy agreement (agreement identifier 000000D730321P5Q0002) and the agreement with the Moscow Institute of Physics and Technology dated November 1, 2021 No. 70-2021-00138.

\end{document}